\numberwithin{equation}{section}
\numberwithin{figure}{section}
\theoremstyle{plain}
\newtheorem{thm}{\protect\theoremname}[section]
\theoremstyle{definition}
\newtheorem{defn}[thm]{\protect\definitionname}
\theoremstyle{remark}
\newtheorem*{rem*}{\protect\remarkname}
\theoremstyle{definition}
\newtheorem{example}[thm]{\protect\examplename}
\DeclareMathOperator{\sech}{sech}
\providecommand{\definitionname}{Definition}
\providecommand{\examplename}{Example}
\providecommand{\remarkname}{Remark}
\providecommand{\theoremname}{Theorem}
\begin{document}
\title[Random Walks for Bernoulli and Euler Polynomials]{Random Walk Models for Nontrivial Identities of Bernoulli and Euler
Polynomials}
\author{Lin Jiu}
\address{Zu Chongzhi Center for Mathematics and Computational Sciences, Duke
Kunshan University, Kunshan, Suzhou, Jiangsu Province, PR China, 215316.}
\email{lin.jiu@dukekunshan.edu.cn}
\author{Italo Simonelli}
\address{Zu Chongzhi Center for Mathematics and Computational Sciences, Duke
Kunshan University, Kunshan, Suzhou, Jiangsu Province, PR China, 215316.}
\email{italo.simonelli@dukekunshan.edu.cn}
\author{Heng Yue*}
\address{Class of 2023, Duke
Kunshan University, Kunshan, Suzhou, Jiangsu Province, PR China, 215316.}
\email{heng.yue@dukekunshan.edu.cn}

\thanks{*corresponding author}

\begin{abstract}
We consider the $1$-dimensional reflected Brownian motion and $3$-dimensional
Bessel process and the general models. By decomposing the hitting
times of consecutive sites into loops, we obtain identities, called
loop identities, for the generating functions of the hitting times. After proving 
this decomposition both combinatorially and inductively, we consider the case that 
 sites are equally distributed. Then, from loop identities,
we derive expressions of Bernoulli and Euler polynomials, in terms
of Euler polynomials of higher-orders. 
\end{abstract}

\keywords{Random walk, Brownian motion, Bessel process, Bernoulli polynomial,
Euler polynomial, Inclusion-exclusion principle}
\subjclass[2020]{05A19, 11B68, 60G50}

\maketitle

\section{Introduction}

Random walks, of various types, have been comprehensively studied
and widely applied in physics, engineering, and especially many fields
of mathematics. The connection between random walk models and special
polynomials, especially Bernoulli and Euler polynomials, was not obvious,
since those polynomials, though with numerous applications in different
areas, mainly appear in number theory and combinatorics. However,
the recent work by the first author and Vignat \cite{JiuVignat}, on
the $1$-dimensional reflected Brownian motion and $3$-dimensional
Bessel process, discovered and proved that some non-trivial identities involving Bernoulli
and Euler polynomials of order $p$, denoted by $B_{n}^{(p)}(x)$
and $E_{n}^{(p)}(x)$ and defined via their exponential generating
functions 
\begin{equation}
\left(\frac{t}{e^{t}-1}\right)^{p}e^{xt}=\sum_{n=0}^{\infty}B_{n}^{(p)}(x)\frac{t^{n}}{n!}\quad\text{and}\quad\left(\frac{2}{e^{t}+1}\right)^{p}e^{xt}=\sum_{n=0}^{\infty}E_{n}^{(p)}(x)\frac{t^{n}}{n!}.\label{eq:GF}
\end{equation}
In particular, $B_{n}(x)=B_{n}^{(1)}(x)$ and $E_{n}(x)=E_{n}^{(1)}(x)$
are the ordinary Bernoulli and Euler polynomials; and Bernoulli numbers
$B_{n}=B_{n}(1)$ and Euler numbers $E_{n}=2^{n}E_{n}(1/2)$ are special
evaluations. See, e.g., \cite[Chap.~24]{NIST} for details. 

This study originally arises from early work \cite[Eq.~(3.8)]{Euler},
where the first author, Moll, and Vignat expressed the usual Euler
polynomials as a linear combination of higher-order Euler polynomials:
for any positive integer $N$, 
\[
E_{n}(x)=\frac{1}{N^{n}}\sum_{\ell=N}^{\infty}p_{\ell}^{(N)}E_{n}^{(\ell)}\left(\frac{\ell-N}{2}+Nx\right).
\]
It is surprising that the positive coefficients $p_{\ell}^{(N)}$
also appear as transition probabilities in the context of a random
walk over a finite number of sites \cite[Note 4.8]{Euler}, which
reveals the possibility to connect random walks and $E_{n}^{(p)}(x)$,
as well as $B_{n}^{(p)}(x)$. 

Results in \cite{JiuVignat} are obtained by decomposing the successive
hitting times ONLY of \emph{two}, \emph{three}, and \emph{four} fixed
levels, i.e., walks with \emph{one} or \emph{two} loops. Therefore,
it is the purpose of this paper to generalize this work into general
$n$ loops,
\begin{itemize}
\item by both inclusion-exclusion principle and induction;
\item in order to obtain the general hitting time decomposition for $n$ loops;
\item and to derive the corresponding identities involving $B_{n}^{(p)}(x)$ and $E_{n}^{(p)}(x)$. 
\end{itemize}

Hence, this paper is organized as follows. In Section \ref{sec:Loop}, we introduce basic notation for random walk,
especially the generating function of the hitting time; the models of $1$-loop and $2$-loop cases are recalled as examples. In Section \ref{sec:NLoop}, we generate the model to general $n$-loops, with two proofs, both combinatorially, by inclusion-exclusion principle, and inductively. In Section \ref{sec:umbral}, we recall the three umbral symbols: Bernoulli, Euler and uniform symbols, with their important properties and connection. These formulas are crucial to derive identities in Section \ref{sec:1loop}. In this last section, we first consider the $1$-dimensional reflected Brownian motion model; as an analogue, the loop decomposition and identites in $3$-dimensional Bessel process model are also derived. 

\section{Preliminaries: loops}\label{sec:Loop}

One can find similar summary in \cite{JiuVignat}, except for a slight
change in the notation: see Def.~\ref{def:phi} below. We still include
them to make this paper self-contained.

\subsection{Notation for paths and loops}

We begin with some notation on the moment generating functions of
random walks among loops. 
\begin{defn}
\label{def:phi}Consider sites $a<b$ and a third site $c$, different
from $a$ and $b$. 
\begin{itemize}
\item We let $\phi_{a\rightarrow b}$ be the \emph{moment generating function
of the hitting time of site $b$ starting from site $a$}; also let
$\phi_{b\rightarrow a}$ be the \emph{counterpart from $b$ to $a$}.
Therefore, 
\[
L_{a,b}:=\phi_{a\rightarrow b}\phi_{b\rightarrow a}.
\]
is the moment generating function of the hitting time 
\begin{itemize}
\item starting from $a$;
\item hitting $b$ first; 
\item and finally returning to $a$. 
\end{itemize}
\item[] Note the symmetry that $L_{b,a}=\phi_{b\rightarrow a}\phi_{a\rightarrow b}=L_{a,b}$.
Thus, this is the \emph{loop} between sites $a$ and $b$. 
\item Also, we let $\phi_{a\rightarrow b|\cancel{c}}$ be the moment generating
function of the hitting time of site $b$ starting from site $a$
\emph{before hitting site $c$}; and similarly for $\phi_{b\rightarrow a|\cancel{c}}$.
It is easy to see, $\phi_{a\rightarrow b|\cancel{c}}=\phi_{a\rightarrow b}$
if $c>b$ and $\phi_{a\rightarrow b|\cancel{c}}=0$ if $a<c<b$. 
\item If $a$ is the $m$th site, denoted by $a_{m}$; and $b$ is the $n$th
site as $a_{n}$, we shall use $\phi_{m\rightarrow n}$, for simplicity,
instead of $\phi_{a_{m}\rightarrow a_{n}}$. 
\item Finally, let $\phi_{m\rightarrow n|\cancel{k}}$ be the moment generating
function of the hitting time from the $m$th site to the $n$th site
before hitting the $k$th site. 
\item We can similarly use $t_{a\rightarrow b}$ etc.~for the hitting times, rather than
their moment generating functions. For instance,  $t_{m\rightarrow n|\cancel{k}}$ is the
hitting time from the $m$th site to the $n$th sitebefore hitting the $k$th site. 
\item 
Here, it is important and als convenient for us to let
\[
L_{n}=\phi_{(n-1)\rightarrow n|\cancel{(n-2)}}\cdot\phi_{n\rightarrow(n-1)|\cancel{(n+1)}}
\]
denote the moment generating function of the hitting time of the
loop between the (consecutive) $(n-1)$th site and the $n$th site.
\end{itemize}
\end{defn}

\begin{example}
We first recall the $1$-loop and $2$-loop cases, already studied
in \cite{JiuVignat}. 
\begin{figure}
\begin{centering}
\includegraphics[scale=0.5]{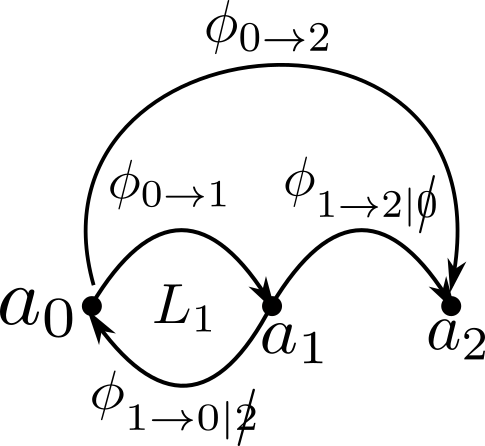}
\par\end{centering}
\caption{$1$-loop case\label{fig:1loop}}
\end{figure}
 The $1$-loop case can be view in Fig.~\ref{fig:1loop}, in which
we assume the initial site is $a_{0}$, namely there is no other site
to the left of $a_{0}$.  It is not hard to see the hitting time decomposition as follows:  
\[
t_{0\rightarrow2}=t_{0\rightarrow1}+\underset{k\text{ copies}}{\underbrace{t_{1\rightarrow0|\cancel{2}}+t_{0\rightarrow1}+\cdots+t_{1\rightarrow0|\cancel{2}}+t_{0\rightarrow1}}}+t_{1\rightarrow2|\cancel{0}},
\]
namely, there can be $k$ copies of $L_{1}$ in the moment generating
functions, for $k=0,1,2,\ldots$. When considering the moment generating
functions of both sides, independence turns the summation into products.
Therefore, we have 
\begin{equation}
\phi_{0\rightarrow2}=\phi_{0\rightarrow1}\phi_{1\rightarrow2|\cancel{0}}\sum_{k=0}^{\infty}\left(\phi_{1\rightarrow0|\cancel{2}}\phi_{0\rightarrow1}\right)^{k}=\frac{\phi_{0\rightarrow1}\phi_{1\rightarrow2|\cancel{0}}}{1-L_{1}}.\label{eq:1loop}
\end{equation}
(See also \cite[Eq.~(2.5)]{JiuVignat}.) In addition, the $2$-loop case is 
\begin{equation}
\phi_{0\rightarrow3}=\frac{\phi_{0\rightarrow1}\phi_{1\rightarrow2|\cancel{0}}\phi_{2\rightarrow3|\cancel{1}}}{1-(L_{1}+L_{2})},\label{eq:2loop}
\end{equation}
by a combinatorial enumeration \cite[Eq.~(2.6)]{JiuVignat}. 
\end{example}

We shall give the loop decomposition for $n$-loops in the following section.

\section{\label{sec:NLoop}General $n$-loop Decomposition}

 In this section, we shall give the expression of the general $n$-loop
formula (see Fig.~\ref{fig:Mloop}, the black paths), as the generalization
of (\ref{eq:1loop}) and (\ref{eq:2loop}). Again, we assume the walk
begins at the site $a_{0}$ and only walk to its right on consecutive
sites $a_{1}<a_{2}<\cdots<a_{m+1}$. Namely the walk will consider $a_0$ as its starting point and there are no 
sites to the left of $a_0$.

\begin{thm}\label{thm:LOOP}
\begin{eqnarray*}
& & \phi_{0\rightarrow (n+1)} =  \phi_{0\rightarrow1}\prod_{j=1}^{n}\phi_{j\rightarrow(j+1)|\cancel{j-1}}\, \left(
\sum_{k\geq 0} \sum_{**}\prod_{t=1}^k L_{i_t}\right)\\[0.2cm]
& & = \phi_{0\rightarrow1}\prod_{j=1}^{n}\phi_{j\rightarrow(j+1)|\cancel{j-1}}\, \sum_{k\geq 0} \left((L_1+L_2+\cdots + L_n)+\sum_{*'}' (-1)^{l+1} (L_{j_1}L_{j_2}\cdots L_{j_l}) \right)^k\\[0.2cm]
& & = \phi_{0\rightarrow1}\prod_{j=1}^{n}\phi_{j\rightarrow(j+1)|\cancel{j-1}}\, 
\frac{1}{1-(L_1+L_2+\cdots + L_n)+\sum\limits_{*'}'(-1)^{l} (L_{j_1}L_{j_2}\cdots L_{j_l})},
\end{eqnarray*}
where 
\[
**=\{(i_1,i_2,\cdots, i_k): 1 \leq i_t \leq n, \text{and } i_t = i_{t+1}, i_t =i_{t+1}+1, \text{or } i_t < i_{t+1}\};
\]
and
\[
*'=\{n\geq j_{1} > \cdots >j_{l}\ge 1,\;  l \geq 2, \; j_{m}-j_{m+1}\ge2\}.
\]
\end{thm}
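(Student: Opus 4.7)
The plan is to prove Theorem \ref{thm:LOOP} in two stages, corresponding to the two non-trivial equalities in the statement. First, I would establish the combinatorial expansion
\[
\phi_{0\rightarrow (n+1)} = \phi_{0\rightarrow1}\prod_{j=1}^{n}\phi_{j\rightarrow(j+1)|\cancel{j-1}} \sum_{k\geq 0} \sum_{**}\prod_{t=1}^k L_{i_t},
\]
which factors the full hitting-time MGF into a forward spine and a sum over admissible loop words. Second, I would show that
\[
\sum_{k\geq 0} \sum_{**} \prod_{t=1}^k L_{i_t} = \frac{1}{1-(L_1+\cdots+L_n)+\sum_{*'}'(-1)^l L_{j_1}\cdots L_{j_l}}
\]
via inclusion-exclusion on the admissibility constraint; the intermediate geometric-series expression follows at once from this by collecting the corrections inside a geometric base.

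For the first stage, I would generalize the reasoning behind (\ref{eq:1loop}) and (\ref{eq:2loop}). By the strong Markov property, every sample path from $a_0$ to $a_{n+1}$ admits a unique decomposition into its monotone first-passage skeleton $a_0 \rightarrow a_1 \rightarrow \cdots \rightarrow a_{n+1}$ --- which contributes precisely the spine product --- together with a finite ordered word of elementary down-and-back loop excursions, each corresponding to some $L_{i_t}$. The constraint defining $**$ captures the spatial requirement that after a loop completes at its base site $a_{i_t - 1}$, the next loop must begin at a site reachable without crossing a spine segment already committed to forward motion: the walker may repeat the same loop (giving $i_{t+1} = i_t$), drop exactly one level to start $L_{i_t - 1}$ (giving $i_{t+1} = i_t - 1$), or ascend to any higher-indexed loop (giving $i_{t+1} > i_t$), but cannot skip two or more levels downward. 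I would supply both the direct combinatorial enumeration, bijecting admissible loop words with path equivalence classes modulo excursion independence, and the inductive argument, which conditions on the walker's activity at site $a_n$ and invokes the $(n-1)$-loop formula as hypothesis.

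For the second stage, I would apply inclusion-exclusion over forbidden consecutive pairs $(i_t, i_{t+1})$ with $i_t \geq i_{t+1} + 2$. Starting from the unrestricted expansion $\sum_{k} (L_1 + \cdots + L_n)^k$ and subtracting sequences containing at least one such pair with the standard alternating-sign correction, the signed contributions consolidate into terms indexed by maximal strictly decreasing chains $j_1 > j_2 > \cdots > j_l$ with gaps $j_m - j_{m+1} \geq 2$ --- that is, by elements of $*'$. Absorbing these corrections into the base of the geometric series produces the middle expression of the theorem, and summing the series yields the rational form. The main obstacle, in my assessment, lies in the first stage: rigorously showing that successive excursions are independent under the spatial-accessibility constraint requires a careful application of the strong Markov property at every loop endpoint, and the bookkeeping of which loop attaches to which portion of the spine is delicate. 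The inductive route is typically cleaner here, since it localises the new combinatorial structure at the rightmost site $a_{n+1}$ without demanding a global path-by-path enumeration.
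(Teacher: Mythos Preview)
Your two-stage plan---first establish the $**$ expansion by reading each down-step as a loop label (so that the constraint $i_{t+1}\ge i_t-1$ is forced because between consecutive down-steps only up-steps occur), then run inclusion--exclusion over the forbidden pairs $i_t\ge i_{t+1}+2$ to collapse to the $*'$ form---is essentially the paper's first proof, and your sketch of both stages is correct.

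Where you diverge is in the inductive alternative. The paper's induction does not aim at the $**$ expansion at all; it proves the rational closed form directly, bypassing inclusion--exclusion. Moreover, its inductive step merges the \emph{leftmost} two loops into a single effective loop $L_2'=L_1L_2/((1-L_1)(1-L_2))$ (with $L_3'=L_3/(1-L_2)$ and $L_k'=L_k$ for $k\ge4$), applies the $(m-1)$-loop hypothesis to sites $0,2,3,\ldots,m+1$, and then checks algebraically that the resulting denominator equals $1-\sum_{k=1}^{m}\sum_{(k,l,m)}$. Your proposed induction instead conditions at the rightmost site and targets only the first equality. Both routes work; the paper's buys a self-contained proof of the closed form without the inclusion--exclusion bookkeeping, at the price of a denominator identity that must be verified by hand, while yours keeps the induction localized to the easier combinatorial expansion and defers the algebra to stage two.
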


\begin{rem*}
As one can tell, terms in $*'$ are loops without consecutive ones; and they are listed in a {\emph{descending}} order, for its combinatorial interpretation later in the proof. Before the proof of Thm.~\ref{thm:LOOP}, we would like to present several example for small number of loops. 
\begin{example}
\label{exa:2-5loops}The formulas for $n=2,3,4,5$, given by Theorem (\ref{thm:LOOP}) are listed as follows.

\begin{align*}
\phi_{0\rightarrow3} & =\frac{\phi_{0\rightarrow1}\phi_{1\rightarrow2|\cancel{0}}\phi_{2\rightarrow3|\cancel{1}}}{1-(L_{1}+L_{2})},\\
\phi_{0\rightarrow4} & =\frac{\phi_{0\rightarrow1}\phi_{1\rightarrow2|\cancel{0}}\phi_{2\rightarrow3|\cancel{1}}\phi_{3\rightarrow4|\cancel{2}}}{1-(L_{1}+L_{2}+L_{3}-L_{3}L_{1})},\\
\phi_{0\rightarrow5} & =\frac{\phi_{0\rightarrow1}\phi_{1\rightarrow2|\cancel{0}}\phi_{2\rightarrow3|\cancel{1}}\phi_{3\rightarrow4|\cancel{2}}\phi_{4\rightarrow5|\cancel{3}}}{1-(L_{1}+L_{2}+L_{3}+L_{4}-L_{4}L_{2}-L_{4}L_{1}-L_{3}L_{1})},\allowdisplaybreaks\\
\phi_{0\rightarrow6} & =\frac{\phi_{0\rightarrow1}\phi_{1\rightarrow2|\cancel{0}}\phi_{2\rightarrow3|\cancel{1}}\phi_{3\rightarrow4|\cancel{2}}\phi_{4\rightarrow5|\cancel{3}}\phi_{5\rightarrow6|\cancel{4}}}{1-(L_{1}+\cdots+L_{5}-L_{5}L_{3}-L_{5}L_{2}-L_{5}L_{1}-L_{4}L_{2}-L_{4}L_{1}-L_{3}L_{1}+L_{5}L_{3}L_{1})}.
\end{align*}
Apparently, if there are only two loops, $L_1$ and $L_2$, then $*$, as well as $*'$ is empty. So we can recover the case of two loops, i.e., \eqref{eq:2loop}. 
\end{example}

\begin{proof}[Proof of Thm.~\ref{thm:LOOP}, by inclusion-exclusion principle]

The proof will  give  a combinatorial interpretation to the terms appearing in  Theorem \ref{thm:LOOP}. 
\vspace{0.2cm}

Let $n\geq 1$. An arbitrary decomposition of $t_{0\rightarrow (n+1)}$ as a sum of hitting times between  consecutive sites, which generalizes the one given  in Example 2.2, can be written as

\[
(A)\qquad t_{0\rightarrow (n+1)}= \sum_{j=0}^{2k+n+1} t_{i_j \rightarrow i_{j+1}|\cancel{i_j^*}}
\]
where
\vspace{0.2cm}

\begin{itemize}
\item[(i)] $i_0=0,\,  i_1 =1,\,  i_{2k+n+1}=n+1$;\\[0.2cm]

\item[(ii)]  if $\; 0 \leq j < 2k+n$, $\quad 0 \leq i_j , i_{j+1} \leq n$, $\quad $ and $\quad \; |i_j-i_{j+1}|=1$;\\[0.2cm]

\item[(iii)] if $\; i_j \not = 0, $ $\; |i_j^* - i_j|=1 \; $ and $\; |i_j^* - i_{j+1}|=2$;\\[0.2cm]

\item[(iv)] if $\; i_j = 0,$  $\; t_{i_j \rightarrow i_{j+1}|\cancel{i_j^*}} =  t_{i_j \rightarrow i_{j+1}} = t_{0\rightarrow 1}.$
\end{itemize}
 \vspace{0.3cm}

 It is not hard to see that as we move through the sum in $(A)$, in the direction of increasing $j$, every time  we encounter a stopping time which corresponds to a step to the left, i.e., from a site $t_j$ to site $t_{j+1}=t_j-1$, a loop is formed. These loops can only occur between consecutive sites, and the order in which they appear is important: different walks with the same collection of loops will have different orderings. The ordering of the loops has an additional constraint, given by the assumption the walk only moves between consecutive sites.  That is, if $t_{j+1} < t_j$, then $t_{j+1}=t_j -1$. No such constraint is needed if $t_{j+1} > t_j$.  Hence every decomposition of $t_{0\rightarrow (n+1)}$  in (A) can be uniquely written as   
\[
(B) \qquad t_{0\rightarrow (n+1)}=   
t_{0\rightarrow 1} +  \sum_{j=1}^{n}t_{j\rightarrow(j+1)|\cancel{j-1}} + \sum_{t=1}^{k} l_{i_t}
\]

where

\begin{itemize}

\item[(a)] the order in which we sum the  $l_{i_t}$'s is important;\\[0.2cm]

\item[(b)] $1 \leq i_t \leq n$, and  $i_t = i_{t+1}$, or  $\; i_t =i_{t+1} +1$, or $i_t < i_{t+1}$.

\end{itemize}
\vspace{0.2cm}

\noindent
Clearly this correspondence can be reversed, and (B) can  be used to express  the moment generating function of $t_{0\rightarrow (n+1)}$ in terms of the moment generating functions of loops. That is, 
 
\begin{equation} \label{new form 0}
\phi_{0\rightarrow n+1}=\phi_{0\rightarrow1}\prod_{j=1}^{n}\phi_{j\rightarrow(j+1)|\cancel{j-1}}\, \left(
\sum_{k\geq 0} \sum_{**}\prod_{t=1}^k L_{i_t}\right),
\end{equation}
where 
\[ **=\{(i_1,i_2,\cdots, i_k):\; \mbox{the $ i_t$'s  satisfy $(b).$}\}
\]
\vspace{0.3cm}

Let $k$ be arbitrary but fixed.  We are going to apply the method of inclusion exclusion to recover  $ \sum\limits_{**}\prod\limits_{t=1}^k L_{i_t}$ from $(L_1+L_2+\cdots + L_n)^k$. 

Note that, if $n=1, 2$, (b) necessarily holds,  and  from (\ref{new form 0}) we immediately obtain

\[
\phi_{0\rightarrow 2} = \phi_{0\rightarrow1}\phi_{1\rightarrow2|\cancel{0}} \sum_{k=0}^{\infty} L_1^k = \frac{\phi_{0\rightarrow1}\phi_{1\rightarrow2|\cancel{0}}}{1-L_1}, 
\]
and
\[
\phi_{0\rightarrow 3}  =  \phi_{0\rightarrow1}\phi_{1\rightarrow2|\cancel{0}}\phi_{2\rightarrow3|\cancel{1}}
 \sum_{k=0}^{\infty} (L_1+L_2)^k = \frac{\phi_{0\rightarrow1}\phi_{1\rightarrow2|\cancel{0}}\phi_{2\rightarrow3|\cancel{1}}
}{1-(L_1+L_2)},
\]
which coincide with (\ref{eq:1loop}) and (\ref{eq:2loop}), respectively (see also Example (\ref{exa:2-5loops})).

Let $n \geq 3$. Before applying the method of inclusion exclusion it is convenient to  introduce an additional notation. Since property (b) can be expressed in terms of the order in which multiplication of the $L_j$'s is performed,  we write
$$(L_1+L_2+\cdots +L_n)^k = \prod_{t=1}^k \, (L_1+L_2+\cdots + L_n)_{(t)}$$
and use these additional subscripts  to label the  forbidden products of pair of moment generating functions of loops (m.g.f. of loops) in the expansion of $(L_1+L_2+\cdots + L_n)^k$. We write   $(L_iL_j)_{(s)}$ when $i-j>1$, $L_i$ comes from the factor $(L_1+L_2+\cdots L_n)_{(s)}$, and $L_j$ comes from  $(L_1+L_2+\cdots + L_n)_{(s+1)}$. Moreover  when we write $ (L_iL_j)_{(s)}\, (L_r L_u)_{(v)},$   we assume that
$$  v \geq s+1,\quad \mbox{ and } v=s+1 \quad \mbox{if and only if } \quad L_j=L_r.$$
In this latter case,
\begin{equation} \label{convention 1}
(L_iL_j)_{(s)}\, (L_j L_u)_{(s+1)} \quad \mbox{ reduces to } \quad  (L_i L_j L_u)_{(s)}.
\end{equation}
 (\ref{convention 1}) naturally extends to products of several forbidden pairs of m.g.f. of loops, thus producing forbidden tuples $(L_{i_1} L_{i_2}\cdots L_{i_t})$, where $i_j \geq i_{j+1} + 2$, $1\leq j \leq t-1$.
\vspace{0.2cm}

The method of inclusion exclusion now gives that $\sum\limits_{k\geq 0} \sum\limits_{**}\prod\limits_{t=1}^k L_{i_t} $ can be written as

\begin{equation} \label{new form 1}
\sum_{k\geq 0} \left[ (L_1 +\cdots + L_n)^k + 
\sum_{1 \leq l \leq k-1} (-1)^{l} \sum_{(l)} 
 \big(\prod_{j=1}^l (L_{i_j}L_{t_j})_{s_j} (L_1 + \cdots + L_n)^{n-\#_l} \big)\right],
 \end{equation} 
where $\#_l$ denotes the number of distinct $L_r$ in  $\prod\limits_{j=1}^l (L_{i_j}L_{t_j})_{s_j} $, and $\sum\limits_{(l)}$ runs over of all possible  $\{(i_j,t_j)_{s_j}, 1\leq j \leq l, s_j < s_{j+1}, \, i_j > t_{j}+1\}.$ Note that this set could be empty. 

Next we rewrite (\ref{new form 1})   in a  very simple form.  First,  whenever possible we apply (\ref{convention 1}) and its extensions, then we drop the subscripts (now they do not provide any additional information),  but we keep parentheses around each forbidden quantity.  Finally, we combine like terms. We view each forbidden tuple, i.e, $ (L_{i_1}L_{i_2}), (L_{i_3}L_{i_4}L_{i_5}), \cdots $ as distinct variables, and,  for every $k$, we collect all monomials  of degree $k$ in these variables. We claim that this procedure reduces  (\ref{new form 1}) to
\begin{equation} \label{new form 2}
\sum_{k\geq 0} \left((L_1+L_2+\cdots + L_n)+\sum_{*'} (-1)^{l+1} (L_{j_1}L_{j_2}\cdots L_{j_l}) \right)^k,
 \end{equation}
 where $*'=\{n\geq j_{1} > \cdots >j_{l}\ge 1,\;  l \geq 2, \; j_{m}-j_{m+1}\ge2\}$.

 We show the equivalence of (\ref{new form 1}) and (\ref{new form 2}) by showing that, for every $k \geq 0$, there is a one to one correspondence between the terms of degree $k$ of these two quantities. Let $k=m$ be arbitrary but fixed. An arbitrary term  in the expansion of 
 $$\left((L_1+L_2+\cdots + L_n)+\sum_{*'} (-1)^{l+1} (L_{j_1}L_{j_2}\cdots L_{j_l}) \right)^m$$
 in (\ref{new form 2}) will have  the form
   \[
 (-1)^s{m \choose t_1,\cdots,t_{r+1}} 
  \left(\prod_{i=1}^{s_1}L_ {j^{(1)}_{i}}\right)^{t_1}\,  \left(\prod_{i=1}^{s_2}L_ {j^{(2)}_{i}}\right)^{t_2}\cdots  \left(\prod_{i=1}^{s_r}L_ {j^{(r)}_{i}}\right)^{t_r}\, (L_1+\cdots + L_n)^{t_{r+1}}
 \]
 for arbitrary positive integers $t_1, t_2, \cdots ,t_{r+1}$, $t_1 + t_2 +\cdots + t_{r+1}=m$,   arbitrary forbidden tuples $(j^{(1)}_{1},\cdots , j^{(1)}_{s_1}), \cdots , (j^{(r)}_{1},\cdots , j^{(r)}_{s_r})$, and  $s=(s_1+1)t_1 + \cdots + (s_r+1) t_r$. 
 In  (\ref{new form 1}) the terms
 \begin{equation} \label{shift}
   (-1)^s\, \left(\prod_{i=1}^{s_1}L_ {j^{(1)}_{i}}\right)^{t_1}\,  \left(\prod_{i=1}^{s_2}L_ {j^{(2)}_{i}}\right)^{t_2}\cdots  \left(\prod_{i=1}^{s_r}L_ {j^{(r)}_{i}}\right)^{t_r}\, (L_1+\cdots + L_n)^{t_{r+1}}
 \end{equation}
 are introduced when  the method of inclusion exclusion is applied
to remove the forbidden pairs of m.g.f. of loops from 
 \[\left(L_1+\cdots + L_n\right)^{t_1s_1+t_2s_2+\cdots + t_r s_r + t_{r+1}}.
 \]
 The number of these terms  can  be obtained by viewing each pair of parentheses $(\cdots)$ in (\ref{shift})  as  distinct objects,  $(\cdots)^{t_i}$ implies $(\cdots)$ is repeated $t_i$ times,  and counting the number of ways they can be arranged on a line, order is important. It is easy to see that this number is 
 
 \[\frac{(t_1+t_2+\cdots + t_r + t_{r+1})!}{t_1!t_2!\cdots t_r!\, t_{r+1}!} =  {m \choose t_1,t_2,\cdots ,t_r,t_{r+1}}.
 \]
 This shows that all the terms of degree $m$ in (\ref{new form 2}) can be found in (\ref{new form 1}). Moreover it is easy to see that these are the only terms of degree $m$  in (\ref{new form 1}), thus proving the equivalence between (\ref{new form 1}) and (\ref{new form 2}). 
 
 Hence,
 \begin{eqnarray*}
 & & \phi_{0\rightarrow (n+1)} =  \phi_{0\rightarrow1}\prod_{j=1}^{n}\phi_{j\rightarrow(j+1)|\cancel{j-1}}\, \left(
 \sum_{k\geq 0} \sum_{**}\prod_{t=1}^k L_{i_t}\right)\\[0.2cm]
& & = \phi_{0\rightarrow1}\prod_{j=1}^{n}\phi_{j\rightarrow(j+1)|\cancel{j-1}}\, \sum_{k\geq 0} \left((L_1+L_2+\cdots + L_n)+\sum_{*'}' (-1)^{l+1} (L_{j_1}L_{j_2}\cdots L_{j_l}) \right)^k\\[0.2cm]
& & = \phi_{0\rightarrow1}\prod_{j=1}^{n}\phi_{j\rightarrow(j+1)|\cancel{j-1}}\, 
\frac{1}{1-(L_1+L_2+\cdots + L_n)+\sum_{*'}' (-1)^{l} (L_{j_1}L_{j_2}\cdots L_{j_l})}.
 \end{eqnarray*}
The proof is now complete.
\end{proof}

\newpage

Next we provide an alternative proof of Theorem (\ref{thm:LOOP}) by using induction on $n$. To simplify the notation in the proof, we shall use
\begin{equation}
\sum\limits _{(k,l,n)}=\sum\limits _{*}(-1)^{l+1}L_{j_{1}}\cdots L_{j_{l}},\label{eq:SummationNotation}
\end{equation}
{\large{}w}here $*=\{k=j_{1}<\cdots<j_{l}\le n,1\le l\le n-k+1,j_{m+1}-j_{m}\ge2\}$, in which we reordered 
the loops in the ascending order.\\
\begin{rem*}
Notice that we have reversed the order of subscript in the newly defined product notation. Indeed, $\sum\limits_{*}$ and $\sum\limits_{*'}$ are mathematically equivalent. Although we believe $\sum\limits_{*'}$ does a better job in conveying the combinatorics idea behind the loop identity, it is for the simplicity of expression that we choose to use the reversed order in the following proof.
\end{rem*}
\noindent It is easy to see that 
\begin{equation}
\sum\limits _{(k,l,n)}=L_{k}-L_{k}\sum\limits _{j=k+2}^{n}\sum\limits _{(j,l,n)}=L_{k}\left(1-\sum\limits _{j=k+2}^{n}\sum\limits _{(j,l,n)}\right).\label{eq:LoopExpansion}
\end{equation}
Hence, we can further rewrite Thm.~\ref{thm:LOOP} as 
\begin{equation}
\phi_{0\rightarrow n+1}=\phi_{0\rightarrow1}\prod_{j=1}^{n}\phi_{j\rightarrow(j+1)|\cancel{j-1}}\frac{1}{1-\sum\limits _{k=1}^{n}\sum\limits _{(k,l,n)}}.\label{eq:Main}
\end{equation}
\end{rem*}

\begin{figure}
\includegraphics[scale=0.3]{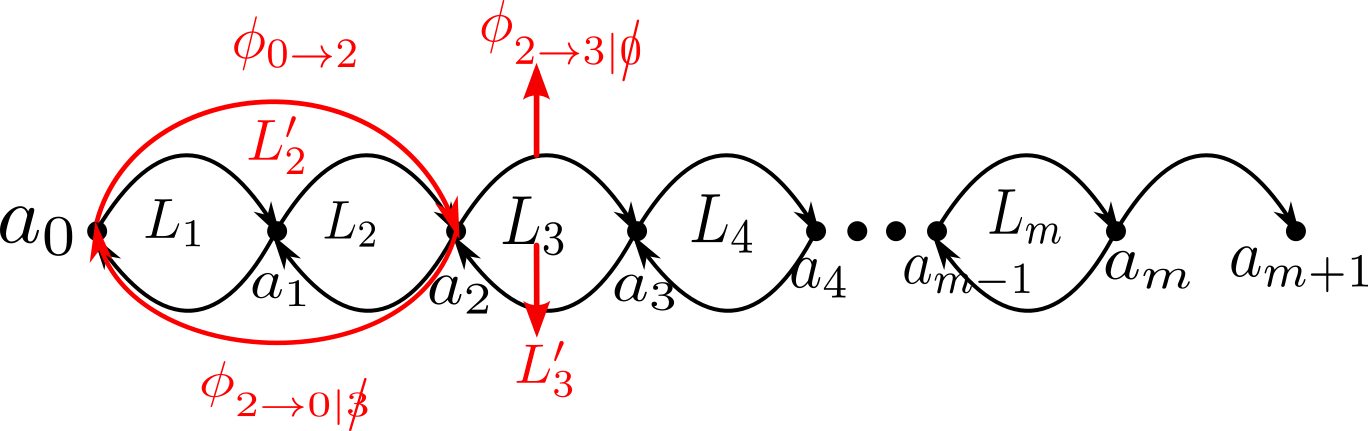}
\caption{\label{fig:Mloop}$m$-loop}
\end{figure}

\begin{proof}[Proof of Thm.~\ref{thm:LOOP}, by induction]
For the case $n=1$, \eqref{eq:Main}
reduces to 
\begin{equation}
\phi_{0\rightarrow2}=\phi_{0\rightarrow1}\phi_{1\rightarrow2|\cancel{0}}\frac{1}{1-L_{1}},\label{eq:n2}
\end{equation}
the same as \eqref{eq:1loop}. Suppose \eqref{eq:Main} holds for
$n=m-1$. Then, we need to show 
\begin{equation}
\phi_{0\rightarrow(m+1)}=\phi_{0\rightarrow1}\prod_{j=1}^{m}\phi_{j\rightarrow(j+1)|\cancel{j-1}}\frac{1}{1-\sum\limits _{k=1}^{m}\sum\limits _{(k,l,m)}}.\label{eq:Inductive}
\end{equation}
We combine the first two loops together, which reduces to $m-1$ new
loops, labeled as $L_{2}',L_{3}',L_{4}',\ldots$. See Fig.~\ref{fig:Mloop}.
Note that $\phi_{0\rightarrow2}$ is given by (\ref{eq:n2}); and
similarly 
\[
\phi_{2\rightarrow0|\cancel{3}}=\frac{\phi_{1\rightarrow2|\cancel{0}}\phi_{2\rightarrow1|\cancel{3}}}{1-\phi_{1\rightarrow2|\cancel{0}}\phi_{2\rightarrow1|\cancel{3}}}.
\]
Hence, 
\[
L'_{2}=\phi_{0\rightarrow2}\phi_{2\rightarrow0|\cancel{3}}=\frac{L_{1}L_{2}}{(1-L_{1})(1-L_{2})}.
\]
In addition, 
\begin{align*}
L'_{3} & =\phi_{2\rightarrow3|\cancel{0}}\phi_{3\rightarrow2|\cancel{4}}\\
 & =\phi_{2\rightarrow3|\cancel{1}}\sum_{k=0}^{\infty}(\phi_{2\rightarrow1|\cancel{3}}\phi_{1\rightarrow2|\cancel{0}})^{k}\phi_{3\rightarrow2|\cancel{4}}\\
 & =(\phi_{2\rightarrow3|\cancel{1}}\phi_{3\rightarrow2|\cancel{4}})\frac{1}{1-\phi_{1\rightarrow2|\cancel{0}}\phi_{2\rightarrow1|\cancel{3}}}\\
 & =\frac{L_{3}}{1-L_{2}},
\end{align*}
and $L'_{k}=L_{k}\text{, for all }4\le k\le m$. To further simplify
expressions, we need a new summation symbol: 
\[
\sum'\limits _{(k,l,n)}=\sum\limits _{*}(-1)^{l+1}L'_{j_{1}}\cdots L'_{j_{l}},
\]
where $*=\{k=j_{1}<\cdots<j_{l}\le n,1\le l\le n-k+1,j_{m+1}-j_{m}\ge2\}$,
which is exactly the same as (\ref{eq:SummationNotation}), with all
$L$'s replaced by $L'$. 

Now apply (\ref{eq:Main}) for sites $0,2,3,\ldots,m+1$, (i.e., with
$m-1$ loops,) to get 
\begin{align*}
\phi_{0\rightarrow(m+1)} & =\phi_{0\rightarrow2}\phi_{2\rightarrow3|\cancel{0}}\prod_{j=3}^{m}\phi_{j\rightarrow(j+1)|\cancel{j-1}}\frac{1}{1-\sum\limits _{k=2}^{m}\sum'\limits _{(k,l,m)}}\\
 & =\phi_{0\rightarrow1}\phi_{1\rightarrow2|\cancel{0}}\frac{1}{1-L_{1}}\phi_{2\rightarrow3|\cancel{1}}\frac{1}{1-L_{2}}\prod_{j=3}^{m}\phi_{j\rightarrow(j+1)|\cancel{j-1}}\frac{1}{1-\sum\limits _{k=2}^{m}\sum'\limits _{(k,l,m)}}\\
 & =\phi_{0\rightarrow1}\prod_{j=1}^{m}\phi_{j\rightarrow(j+1)|\cancel{j-1}}\frac{1}{(1-L_{1})(1-L_{2})\left(1-\sum\limits _{k=2}^{m}\sum'\limits _{(k,l,m)}\right)}.
\end{align*}
Therefore, (\ref{eq:Inductive}) is equivalent to 
\begin{equation}
(1-L_{1})(1-L_{2})\left(1-\sum\limits _{k=2}^{m}\sum'\limits _{(k,l,m)}\right)=1-\sum\limits _{k=1}^{m}\sum\limits _{(k,l,m)}.\label{eq:ProofExpansion}
\end{equation}
By applying (\ref{eq:LoopExpansion}), we have the left-hand side
\begin{align*}
 & \quad(1-L_{1})(1-L_{2})\left(1-\sum'\limits _{(2,l,m)}-\sum'\limits _{(3,l,m)}-\sum\limits _{k=4}^{m}\sum'\limits _{(k,l,m)}\right)\allowdisplaybreaks\\
 & =1-L_{1}-L_{2}+L_{1}L_{2}-(1-L_{1})(1-L_{2})\left[L'_{2}\left(1-\sum\limits _{k=4}^{m}\sum'\limits _{(k,l,m)}\right)\right.\\
 & \quad\left.+L'_{3}\left(1-\sum\limits _{k=5}^{m}\sum'\limits _{(k,l,m)}\right)\right]-\sum\limits _{k=4}^{m}\sum'\limits _{(k,l,m)}+L_{1}\sum\limits _{k=4}^{m}\sum'\limits _{(k,l,m)}+L_{2}\sum\limits _{k=4}^{m}\sum'\limits _{(k,l,m)}\\
 & \quad-L_{1}L_{2}\sum\limits _{k=4}^{m}\sum'\limits _{(k,l,m)}\\
 & =1-L_{1}-L_{2}+\underline{L_{1}L_{2}-L_{1}L_{2}\left(1-\sum\limits _{k=4}^{m}\sum'\limits _{(k,l,m)}\right)}-L_{3}\left(1-\sum\limits _{k=5}^{m}\sum'\limits _{(k,l,m)}\right)\\
 & \quad+L_{1}L_{3}\left(1-\sum\limits _{k=5}^{m}\sum'\limits _{(k,l,m)}\right)-\sum\limits _{k=4}^{m}\sum'\limits _{(k,l,m)}+L_{1}\sum\limits _{k=4}^{m}\sum'\limits _{(k,l,m)}+L_{2}\sum\limits _{k=4}^{m}\sum'\limits _{(k,l,m)}\\
 & \quad\underline{-L_{1}L_{2}\sum\limits _{k=4}^{m}\sum'\limits _{(k,l,m)}}.
\end{align*}
Note that the two term underlined cancel; and since $L'_{k}=L_{k}$
for $k\geq4$, all the $\sum\limits'$ terms above are actually $\sum$.
Therefore, the left-hand side of (\ref{eq:ProofExpansion}) is 
\begin{align*}
 & =1-L_{1}\left[1-L_{3}\left(1-\sum\limits _{k=5}^{m}\sum\limits _{(k,l,m)}\right)-\sum\limits _{k=4}^{m}\sum\limits _{(k,l,m)}\right]-L_{2}\left(1-\sum\limits _{k=4}^{m}\sum\limits _{(k,l,m)}\right)\allowdisplaybreaks\\
 & \quad-L_{3}\left(1-\sum\limits _{k=5}^{m}\sum\limits _{(k,l,m)}\right)-\sum\limits _{k=4}^{m}\sum\limits _{(k,l,m)}\\
 & =1-\sum\limits _{k=1}^{m}\sum\limits _{(k,l,m)},
\end{align*}
which is the right-hand side of (\ref{eq:ProofExpansion}). 
\end{proof}

\vspace{0.2cm}

\section{Preliminaries: umbral random symbols}\label{sec:umbral}

Again, one can find similar summary of this section in \cite{JiuVignat}; but we are restating them here, for self-containedness. We will let $\mathcal{B}$, $\mathcal{E}$, and $\mathcal{U}$ be
the \emph{Bernoulli}, \emph{Euler}, and \emph{uniform (umbral) symbols},
respectively. They are defined as follows. 

\subsection{Bernoulli $\mathcal{B}$}

The Bernoulli symbol $\mathcal{B}$ satisfies the evaluation rule
\begin{equation}
(x+\mathcal{B})^{n}=B_{n}(x).\label{eq:BernoulliBEval}
\end{equation}
In fact, $\mathcal{B}$ can be viewed as a random variable \cite[Thm.~2.3]{Zagier},
i.e., $\mathcal{B}=iL_{B}-1/2$, for $i^{2}=-1$ and $L_{B}$ is the
random variable on $\mathbb{R}$, with density $p_{B}(t)=\pi\sech^{2}(\pi t)/2$.
Hence, the evaluation rule is equivalent to the expectation operator.
Moreover, for any suitable function $f$ (, i.e., one making integrals absolutely convergent,)
\[
f(x+\mathcal{B})=\mathbb{E}\left[f\left(x+iL_{B}-\frac{1}{2}\right)\right]=\frac{\pi}{2}\int_{\mathbb{R}}f\left(x+it-\frac{1}{2}\right)\sech^{2}(\pi t)dt.
\]
In particular, $f(x)=x^{n}$ yields (\ref{eq:BernoulliBEval}). In addition,
we have 
\[
B_{n}^{(p)}(x)=\left(x+\mathcal{B}^{(p)}\right)^{n}=\left(x+\mathcal{B}_{1}+\cdots+\mathcal{B}_{p}\right)^{n}
\]
for a set of $p$ {\emph{independent}} umbral symbols (or random variables)
$\left(\mathcal{B}_{i}\right)_{i=1}^{p}$, satisfying:
\begin{itemize}
\item if $i\neq j$, so that $\mathcal{B}_{i}$ and $\mathcal{B}_{j}$ are
independent, then we evaluate 
\[
\mathcal{B}_{i}^{n}\mathcal{B}_{j}^{m}=B_{n}B_{m};
\]
\item and if $i=j$, then 
\[
\mathcal{B}_{i}^{n}\mathcal{B}_{j}^{m}=\mathcal{B}_{i}^{n+m}=B_{n+m}.
\]
\end{itemize}
Now, with (\ref{eq:GF}), we deduce that 
\begin{equation}
e^{\mathcal{B}t}=\frac{t}{e^{t}-1},\quad e^{t(2\mathcal{B}+1)}=\frac{t}{\sinh t},\quad\text{and}\quad e^{t(2\mathcal{B}^{(p)}+p)}=\left(\frac{t}{\sinh t}\right)^{p}.\label{eq:Bp}
\end{equation}

\subsection{Euler $\mathcal{E}$. }

The Euler symbol $\mathcal{E}$ can be similarly defined via the random
variable interpretation that $\mathcal{E}=iL_{E}-1/2$, where $L_{E}$'s
density is given by $p_{E}(t)=\sech(\pi t)$. Then, $(\mathcal{E}+x)^n=E_n(x)$; and in particular,
for sum of independent symbols $\mathcal{E}^{(p)}=\mathcal{E}_{1}+\cdots+\mathcal{E}_{p}$,
\[
E_{n}^{(p)}(x)=\left(x+\mathcal{E}^{(p)}\right)^{n}.
\]
Therefore, (\ref{eq:GF}) yields
\begin{equation}
e^{t\mathcal{E}}=\frac{2}{e^{t}+1},\quad e^{t(2\mathcal{E}+1)}=\sech t,\quad\text{and}\quad e^{t(2\mathcal{E}^{(p)}+p)}=\sech^{p}t.\label{eq:Ep}
\end{equation}
Moreover, from the generating functions, \eqref{eq:Bp} and \eqref{eq:Ep},
\[
e^{2\mathcal{B}t}=\frac{2t}{e^{2t}-1}=\frac{t}{e^{t}-1}\cdot\frac{2}{e^{t}+1}=e^{t(\mathcal{B}+\mathcal{E})},
\]
we can have $2\mathcal{B}=\mathcal{B}+\mathcal{E}$, namely for any
suitable function $f$, $f(x+2\mathcal{B})=f(x+\mathcal{B}+\mathcal{E})$. 

\subsection{Uniform $\mathcal{U}$}

The uniform symbol $\mathcal{U}$ is the uniform random variable on
$[0,1]$, i.e., $\mathcal{U}\sim U[0,1]$, so that the evaluation
is 
\begin{equation}
\mathcal{U}^{n}=\int_{0}^{1}t^{n}dt=\frac{1}{n+1}.\label{eq:RVU}
\end{equation}
Easily, we have 
\begin{equation}
e^{t\mathcal{U}}=\sum_{n=0}^{\infty}\frac{t^{n}}{(n+1)!}=\frac{e^{t}-1}{t},\quad e^{t(2\mathcal{U}-1)}=\frac{\sinh t}{t},\quad\text{and}\quad e^{t(2\mathcal{U}^{(p)}-p)}=\left(\frac{\sinh t}{t}\right)^{p},\label{eq:Up}
\end{equation}
for the sum of independent symbols~$\mathcal{U}^{(p)}=\mathcal{U}_{1}+\cdots+\mathcal{U}_{p}$.
An important link between $\mathcal{B}$ and $\mathcal{U}$ is the
cancellation rule. Note that 
\[
e^{t(\mathcal{U}+\mathcal{B})}=e^{t\mathcal{U}}e^{t\mathcal{B}}=\frac{e^{t}-1}{t}\cdot\frac{t}{e^{t}-1}=1.
\]
So for a suitable function $f$, 
\[
f(x+\mathcal{B}+\mathcal{U})=f(x).
\]

In what follows, we will use independent copies of the three symbols.
In order to distinguish them, we shall denote independent uniform
symbols by $\mathcal{U},\mathcal{U}',\ldots$ and $\mathcal{U}^{(p)},\mathcal{U}'^{(p)},\ldots$;
and similarly for the other two symbols.

\section{Identities of Bernoulli and Euler polynomials}\label{sec:1loop}

Now, with the loop decomposition (\ref{eq:Main}) and evaluation of
symbols (\ref{eq:Bp}), (\ref{eq:Ep}), and (\ref{eq:Up}), we can
derive certain identities. Note that, even in the case of two loops,
it does not seem possible to further simplify the expressions completely
in terms of Bernoulli and Euler polynomials, but in terms of the three
symbols; see e.g., \cite[Thms.~3.4 and 4.2 ]{JiuVignat}. We would
like to set all the sites \emph{equally distributed}, namely $a_{j}=j$,
for $j=0,1,2,\ldots,n$, throughout this section. 

\subsection{$1$-dim reflected Brownian motion on $\mathbb{R}_{+}$}

In this case, for three consecutive sites $a<b<c$, the generating
functions of the corresponding hitting times can be found in \cite[p.~198 and p.~355]{Formulas}
with variable $w$: 
\begin{align*}
\phi_{a\rightarrow b} & =\frac{\cosh(aw)}{\cosh(bw)},\\
\phi_{b\rightarrow a|\cancel{c}} & =\frac{\sinh\left((c-b)w\right)}{\sinh\left((c-a)w\right)},\\
\phi_{b\rightarrow c|\cancel{a}} & =\frac{\sinh\left((b-a)w\right)}{\sinh\left((c-a)w\right)}.
\end{align*}
In this case, we begin with $a_{0}=0$ as the initial site and then
apply the formulas above to have, for $n\geq1$. 
\begin{equation}
\phi_{0\rightarrow n}=\frac{1}{\cosh(nw)}\quad\text{and}\quad\phi_{n\rightarrow n+1|\cancel{n-1}}=\phi_{n\rightarrow n-1|\cancel{n+1}}=\frac{1}{2\cosh(w)}.\label{eq:1dim}
\end{equation}

Before we state and prove the general formula, we first compute an
example of $3$-loop case, which is not included in \cite{JiuVignat}. 
\begin{example}
As stated in Ex.~\ref{exa:2-5loops}, 
\[
\phi_{0\rightarrow4}=\frac{\phi_{0\rightarrow1}\phi_{1\rightarrow2|\cancel{0}}\phi_{2\rightarrow3|\cancel{1}}\phi_{3\rightarrow4|\cancel{2}}}{1-(L_{1}+L_{2}+L_{3}-L_{1}L_{3})}.
\]
Now, apply (\ref{eq:1dim}) to have 
\begin{align*}
\frac{1}{\cosh(4w)} & =\frac{1}{8\cosh^{4}w}\sum_{k=0}^{\infty}\left(\frac{\sinh w}{\sinh(2w)\cosh w}+\frac{2\sinh^{2}w}{\sinh^{2}(2w)}\right.\\
 & \quad\left.-\frac{\sinh w}{\sinh(2w)\cosh w}\cdot\frac{\sinh^{2}w}{\sinh^{2}(2w)}\right)^{k}.
\end{align*}
\begin{itemize}
\item The left-hand side is simply $\sech(4w)=\exp\left\{ 4w(2\mathcal{E}+1)\right\} .$
\item For the right-hand side, we first simplify that
\begin{align*}
\frac{\sinh w}{\sinh(2w)\cosh w}+\frac{2\sinh^{2}w}{\sinh^{2}(2w)} & =\frac{1}{\cosh^{2}w},\\
\frac{\sinh w}{\sinh(2w)\cosh w}\cdot\frac{\sinh^{2}w}{\sinh^{2}(2w)} & =\frac{1}{8\cosh^{4}w}.
\end{align*}
Hence, we have
\begin{align*}
 & \quad\frac{1}{8\cosh^{4}w}\sum_{k=0}^{\infty}\left(\frac{1}{\cosh^{2}w}-\frac{1}{8\cosh^{4}w}\right)^{k}\\
 & =\frac{1}{8\cosh^{4}w}\sum_{k=0}^{\infty}\sum_{\ell=0}^{k}(-1)^{\ell}\binom{k}{\ell}\left(\frac{1}{\cosh^{2}w}\right)^{k-\ell}\left(\frac{1}{8\cosh^{4}w}\right)^{\ell}\\
 & =\sum_{k=0}^{\infty}\sum_{\ell=0}^{k}(-1)^{\ell}\binom{k}{\ell}\frac{1}{8^{\ell+1}}\cdot\left(\frac{1}{\cosh w}\right)^{2k+2\ell+4}\\
 & =\sum_{k=0}^{\infty}\sum_{\ell=0}^{k}(-1)^{\ell}\binom{k}{\ell}\frac{1}{8^{\ell+1}}\exp\left\{ w(2\mathcal{E}^{(2k+2\ell+4)}+2k+2\ell+4)\right\} .
\end{align*}
\end{itemize}
Namely, 
\[
\exp\left\{ 8\mathcal{E}w+4w\right\} =\sum_{k=0}^{\infty}\sum_{\ell=0}^{k}(-1)^{\ell}\binom{k}{\ell}\frac{1}{8^{\ell+1}}\exp\left\{ w(2\mathcal{E}^{(2k+2\ell+4)}+2k+2\ell+4)\right\} ,
\]
i.e., 
\[
\exp\left\{ 8\mathcal{E}w\right\} =\sum_{k=0}^{\infty}\sum_{\ell=0}^{k}(-1)^{\ell}\binom{k}{\ell}\frac{1}{8^{\ell+1}}\exp\left\{ w(2\mathcal{E}^{(2k+2\ell+4)}+2k+2\ell)\right\} .
\]
Multiplying both sides by $\exp\{xw\}$ and comparing the coefficients
of $w^{n}$ , we see
\begin{itemize}
\item the left-hand side yields
\[
\exp\left\{ (8\mathcal{E}+x)w\right\} \Rightarrow\left(8\mathcal{E}+x\right)^{n}=8^{n}E_{n}\left(\frac{x}{8}\right);
\]
\item while the right-hand side gives 
\begin{align*}
 & \quad\sum_{k=0}^{\infty}\sum_{\ell=0}^{k}(-1)^{\ell}\binom{k}{\ell}\frac{1}{8^{\ell+1}}(2\mathcal{E}^{(2k+2\ell+4)}+2k+2\ell+x)^{n}\\
 & =\sum_{k=0}^{\infty}\sum_{\ell=0}^{k}(-1)^{\ell}\binom{k}{\ell}\frac{2^{n}}{8^{\ell+1}}E_{n}^{(2k+2\ell+4)}\left(\frac{x}{2}+k+\ell\right).
\end{align*}
\end{itemize}
Therefore, we have 
\[
E_{n}(x)=\frac{1}{4^{n}}\sum_{k=0}^{\infty}\sum_{\ell=0}^{k}(-1)^{\ell}\binom{k}{\ell}\frac{2^{n}}{8^{\ell+1}}E_{n}^{(2k+2\ell)}\left(4x+k+\ell\right).
\]
\end{example}

\begin{rem*}
It is easy to see that $\phi_{0\rightarrow1}$ is twice of other paths:
$\phi_{n\rightarrow n+1|\cancel{n-1}}=\phi_{n\rightarrow n-1|\cancel{n+1}}$,
which also causes the difference between $L_{1}$ and $L_{j}$'s $j=2,3,\ldots$
The following combinatorial enumeration is the key to find the general
formulas.
\end{rem*}
\begin{defn}
Suppose $S=\{a_{1},a_{2},\dots,a_{n}\}$ are a set of a sequence of
$n$ mathematical objects. 
\begin{enumerate}
\item If the subindices of $a_{j_{1}}a_{j_{2}}\cdots a_{j_{m}}$ satisfy
$1\le j_{1}\le\cdots\le j_{m}\le n,j_{k}-j_{k-1}\ge2$, we call it
a \emph{nonadjacent product of order $n$, length $m$ with initial
state $j_{1}$}. 
\item We define $N(\ell,n)$ as \emph{the number of all nonadjacent products
of order $n$ and length $\ell$ }(, without specific initial state). 
\item And finally we let $n(a,\ell,m)$ be \emph{the number of different
nonadjacent products of order $n$, length $l$ with initial state
$a$}.
\end{enumerate}
\end{defn}

\begin{rem*}
By convention, $N(\ell,n)$ and $n(a,\ell,m)$ can both be zero, if
no such product exist. 
\end{rem*}
\begin{proof}
Let $\ell$ be an integer such that $3\leq\ell\leq n$, then
\begin{equation}
N(\ell,n)=N(\ell,n-1)+N(\ell-1,n-2).\label{eq:NlnRec}
\end{equation}
All the nonadjacent products of order $n$ and length $\ell$ can
be divide into two part: the first part consists of the existed nonadjacent
products before adding $a_{n}$, which are all nonadjacent products
of order $n$ and length $\ell$; and the second part consists of
the new nonadjacent products after adding $a_{n}$, which are all
nonadjacent products of order $n-2$ and length $\ell-1$. 
\end{proof}
\begin{thm}
Suppose $2\le\ell\le n$, then 
\[
n(1,\ell,n)=\sum_{k=3}^{n}n(k,\ell-1,n)=N(\ell-1,n-2).
\]
\end{thm}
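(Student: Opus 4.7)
The plan is to establish both equalities by direct bijection arguments, since each side counts certain nonadjacent products and the identification between them is essentially index tracking.

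For the first equality, $n(1,\ell,n)=\sum_{k=3}^{n}n(k,\ell-1,n)$, I would argue as follows. Any nonadjacent product of order $n$, length $\ell$ with initial state $1$ has the form $a_{1}a_{j_{2}}\cdots a_{j_{\ell}}$ with $1<j_{2}<\cdots<j_{\ell}\le n$ and all consecutive gaps $\ge 2$. The gap constraint $j_{2}-1\ge 2$ forces $j_{2}\ge 3$, so deleting the leading factor $a_{1}$ produces a nonadjacent product of order $n$ and length $\ell-1$ whose initial state $j_{2}$ ranges over $\{3,4,\ldots,n\}$. This map is clearly invertible (re-prepend $a_{1}$; the prepended gap $j_{2}-1\ge 2$ is automatic), so partitioning the right-hand side by the value of the new initial state yields the claimed sum.

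For the second equality, $\sum_{k=3}^{n}n(k,\ell-1,n)=N(\ell-1,n-2)$, I would use the shift map $a_{j}\mapsto a_{j-2}$. A nonadjacent product $a_{j_{1}}\cdots a_{j_{\ell-1}}$ of order $n$ and length $\ell-1$ with $j_{1}\ge 3$ has all indices in $\{3,\ldots,n\}$, and under the shift becomes a sequence $a_{j_{1}-2}\cdots a_{j_{\ell-1}-2}$ of length $\ell-1$ with indices in $\{1,\ldots,n-2\}$. The shift preserves consecutive differences, so the gap condition $j_{m}-j_{m-1}\ge 2$ is preserved, producing an arbitrary nonadjacent product of order $n-2$ and length $\ell-1$. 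The inverse map shifts by $+2$, so this is a bijection, and summing over all allowed initial states on the left side is the same as counting \emph{all} nonadjacent products of order $n-2$ and length $\ell-1$, which is $N(\ell-1,n-2)$ by definition.

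There is no real obstacle: both steps are pure bookkeeping, and the only subtlety is to check that the gap condition transfers correctly under each map (for the first equality, the newly created gap $j_{2}-1$ is automatically $\ge 2$; for the second, the shift is an isometry on the index line). Combining the two bijections yields the full chain of equalities stated in the theorem.
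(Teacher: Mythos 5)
Your proof is correct and follows essentially the same route as the paper's: delete the leading factor $a_{1}$ (noting the gap condition forces the next index to be at least $3$) to get the first equality, then shift indices down by $2$ to identify the resulting products with all nonadjacent products of order $n-2$ and length $\ell-1$. You simply spell out the bijections and the preservation of the gap condition more explicitly than the paper does.
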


\begin{proof}
A nonadjacent product of order $n$, length $\ell$ and initial state
$1$ starts with $a_{1}$ and follows with $a_{j},j\ge3$. So the
number of all nonadjacent products of order $n$, length $\ell$ and
initial state $1$ starts with $a_{1}$ equals to the number of all
nonadjacent products of order $n$, length $\ell-1$ and initial state
$1$ starts with $a_{j},j\ge3$, which also equals to the number of
all nonadjacent products of order $n-2$ and length $\ell-1$. 
\end{proof}
Note that $\binom{n}{k}=0$ if $k>n$, or $k<0$. So we can identify
$N(\ell,n)$ as the binomial coefficients. 
\begin{thm}
Suppose $1\le\ell\le n$, then 
\[
N(\ell,n)=\binom{n-\ell+1}{\ell}.
\]
\end{thm}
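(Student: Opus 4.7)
The plan is to prove the identity by either a direct bijection or by induction on $n$, matching Pascal's rule against the recursion (\ref{eq:NlnRec}). I would lead with the bijective argument, since it is cleaner and explains why the binomial coefficient appears.

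For the bijection, I would take an arbitrary nonadjacent product $a_{j_1} a_{j_2} \cdots a_{j_\ell}$ of order $n$ and length $\ell$, so $1 \le j_1 < j_2 < \cdots < j_\ell \le n$ with $j_{k+1} - j_k \ge 2$ for every $k$. Then I would define the \emph{shifted} tuple $i_k := j_k - (k-1)$. The nonadjacency condition $j_{k+1} - j_k \ge 2$ translates precisely to $i_{k+1} - i_k \ge 1$, i.e., $i_1 < i_2 < \cdots < i_\ell$; and the bounds become $1 \le i_1$ and $i_\ell = j_\ell - (\ell - 1) \le n - \ell + 1$. The inverse map $j_k := i_k + (k-1)$ is clearly well-defined on strictly increasing $\ell$-tuples in $\{1, 2, \ldots, n-\ell+1\}$, and it produces tuples satisfying the gap condition. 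Hence the set of nonadjacent products of order $n$ and length $\ell$ is in bijection with the $\ell$-element subsets of $\{1, 2, \ldots, n-\ell+1\}$, giving $N(\ell, n) = \binom{n-\ell+1}{\ell}$.

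As a safety net, I would also sketch the inductive argument. The base cases $\ell = 1$ (giving $N(1,n) = n = \binom{n}{1}$) and $\ell = 2$ (giving $N(2,n) = \binom{n-1}{2}$ by elementary count) are immediate. For $3 \le \ell \le n$, the recursion (\ref{eq:NlnRec}) reads
\[
N(\ell, n) = N(\ell, n-1) + N(\ell-1, n-2),
\]
and the inductive hypothesis turns the right-hand side into
\[
\binom{n-\ell}{\ell} + \binom{n-\ell}{\ell-1} = \binom{n-\ell+1}{\ell},
\]
by Pascal's identity, as required. The edge case $\ell = n$ (where necessarily no such nonadjacent product exists unless $n=1$) and the boundary where the recursion's second term vanishes should be checked, but these are covered by the usual convention $\binom{m}{k} = 0$ for $k > m$ or $k < 0$ that the authors already adopted just before the statement.

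The only potential obstacle is a subtle indexing issue: the recursion (\ref{eq:NlnRec}) in the paper is stated for $3 \le \ell \le n$, so the base cases $\ell = 1, 2$ must be verified directly, and one must confirm that the bijection or the recursion handles the degenerate situations ($\ell = n$, or $\ell > \lceil n/2 \rceil$ where $N(\ell,n) = 0$) correctly via the standard vanishing convention for binomial coefficients. Neither issue is deep, so I expect the bijection paragraph to suffice on its own, with the induction included only if the authors prefer an algebraic verification.
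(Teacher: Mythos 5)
Your proposal is correct, and your lead argument takes a genuinely different route from the paper. The paper proves the identity by verifying that $\binom{n-\ell+1}{\ell}$ satisfies the recurrence (\ref{eq:NlnRec}) via Pascal's rule $\binom{n-\ell+1}{\ell}=\binom{n-\ell}{\ell}+\binom{n-\ell}{\ell-1}$, together with the single initial condition $N(1,n)=n$; your backup induction is essentially this argument. Your primary argument, the shift bijection $i_k := j_k-(k-1)$ sending gap-$\ge 2$ sequences in $\{1,\dots,n\}$ to strictly increasing $\ell$-tuples in $\{1,\dots,n-\ell+1\}$, is the classical direct count and is entirely self-contained: it does not invoke the recurrence at all, it handles the degenerate cases ($\ell=n$, or $\ell>\lceil n/2\rceil$ where $N(\ell,n)=0$) automatically through the vanishing convention, and it explains structurally why a binomial coefficient appears. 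It also sidesteps a small gap in the paper's own argument that you correctly identified: since (\ref{eq:NlnRec}) is only asserted for $3\le\ell\le n$, the inductive route really needs both $\ell=1$ and $\ell=2$ as base cases, whereas the paper checks only $\ell=1$. The paper's approach buys brevity, given that the recurrence had already been established; yours buys independence and completeness. Either suffices, but your bijection is the cleaner and more robust of the two.
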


\begin{proof}
It suffices to show that $\binom{n-l+1}{l}$ satisfies the same initial
conditions and recurrence relation with $N(\ell,n)$, which is easy
to see, since 
\[
\binom{n-\ell+1}{\ell}=\binom{n-\ell}{\ell}+\binom{n-\ell}{\ell-1}.
\]
coincides with (\ref{eq:NlnRec}). For the initial conditions, notice
that when $\ell=1$, 
\[
N(1,n)=n=\binom{n}{1},
\]
which are exact the loops $L_{1},\ldots,L_{n}$. 
\end{proof}
Recall the \emph{multinomial coefficients}: for $k_{1},\ldots,k_{m}\in\mathbb{N}$,
$k_{1}+\cdots+k_{m}\leq n$, 
\[
\binom{n}{k_{1},\ldots,k_{m}}=\frac{n!}{k_{1}!\cdots k_{m}!(n-k_{1}-\cdots-k_{m})!}.
\]
In particular, for $m=1$, we have the binomial coefficients $\binom{n}{k}$. 
\begin{thm}
Let $M=\lceil m\rceil-1$, and $M'$ be the largest odd number less
or equal to $M$, then 
\begin{align}
E_{n}\left(\frac{x}{m+1}\right) & =\frac{1}{(m+1)^{n}}\sum_{k=0}^{\infty}\frac{(m+1)^{k}}{2^{2k+m}}\sum_{n_{1},\dots,n_{M}=0}^{k}\binom{k}{n_{1},\dots,n_{M}}\label{eq:1dimIdentity}\\
 & \ \ \ \times(-1)^{n_{1}+n_{3}+\cdots+n_{M'}}\frac{4^{n_{1}+\cdots+n_{M}}}{(m+1)^{n_{1}+\cdots+n_{M}}}\nonumber \\
 & \ \ \ \times\left(\frac{\binom{m-2}{1}}{2^{3}}+\frac{\binom{m-2}{2}}{2^{4}}\right)^{n_{1}}\cdots\left(\frac{\binom{m-M-1}{M}}{2^{2M+1}}+\frac{\binom{m-M-1}{M+1}}{2^{2M+2}}\right)^{n_{M}}\nonumber \\
 & \ \ \ \times E_{n}^{(2k+2n_{1}+4n_{2}+\cdots+2Mn_{M}+m)}\left(k+n_{1}+2n_{2}+\cdots+Mn_{M}+x\right).\nonumber 
\end{align}
\end{thm}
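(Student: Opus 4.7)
The plan is to specialize the general loop decomposition \eqref{eq:Main} to the 1-dimensional reflected Brownian motion on the sites $0,1,\dots,m+1$ (so that there are $m$ loops), and then to convert the resulting hyperbolic identity into one involving Euler polynomials via the umbral calculus of Section~\ref{sec:umbral}.

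First I would substitute \eqref{eq:1dim} into \eqref{eq:Main}. The left-hand side $\phi_{0\rightarrow m+1}$ becomes $1/\cosh((m+1)w)$, while the prefactor $\phi_{0\rightarrow1}\prod_{j=1}^{m}\phi_{j\rightarrow(j+1)|\cancel{j-1}}$ collapses to $1/(2^{m}\cosh^{m+1}w)$. A direct calculation of the loops gives $L_{1}=1/(2\cosh^{2}w)$ and $L_{j}=1/(4\cosh^{2}w)$ for $j\ge 2$. Setting $L:=1/(4\cosh^{2}w)$, we have $L_{1}=2L$ while every other $L_{j}$ equals $L$. This single unit of asymmetry, flagged in the remark preceding the theorem, is what produces the mixed bracket $\binom{m-\ell-1}{\ell}/2^{2\ell+1}+\binom{m-\ell-1}{\ell+1}/2^{2\ell+2}$ in the final identity.

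Next, I would evaluate the signed sum $\sum_{k=1}^{m}\sum_{(k,l,m)}$ of nonadjacent products. Each length-$\ell$ nonadjacent product evaluates to $2L^{\ell}$ or $L^{\ell}$ depending on whether it contains $L_{1}$. Using the counts $N(\ell,m)$ and $n(1,\ell,m)=N(\ell-1,m-2)$ established in the combinatorial lemmas, the sum of length-$\ell$ products is therefore $S_{\ell}=L^{\ell}\bigl[\binom{m-\ell+1}{\ell}+\binom{m-\ell}{\ell-1}\bigr]$; in particular $S_{1}=(m+1)L$. Splitting off this length-$1$ contribution and setting $y_{\ell}:=(-1)^{\ell}S_{\ell+1}$ for $\ell\ge 1$, the geometric series expands as
\[
\sum_{k\ge 0}\bigl((m+1)L+y_{1}+y_{2}+\cdots+y_{M}\bigr)^{k}.
\]
The multinomial theorem, with $n_{\ell}$ the exponent of $y_{\ell}$ and the implicit slot $n_{0}:=k-n_{1}-\cdots-n_{M}$ carrying the multiplicity of $(m+1)L$, supplies the outer combinatorial factor; the sign contribution $(-1)^{\sum_{\ell}\ell n_{\ell}}$ reduces modulo two to $(-1)^{n_{1}+n_{3}+\cdots+n_{M'}}$, and a single application of Pascal's rule $\binom{m-\ell}{\ell+1}=\binom{m-\ell-1}{\ell+1}+\binom{m-\ell-1}{\ell}$ rewrites each $S_{\ell+1}/4^{\ell+1}$ in the bracket form appearing in \eqref{eq:1dimIdentity}.

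Finally, the umbral rule $\sech^{p}(w)=\exp\{w(2\mathcal{E}^{(p)}+p)\}$ of \eqref{eq:Ep} converts each term of the expansion into an exponential of order $p$ equal to the accumulated $\cosh$-exponent, while the left-hand side $\sech((m+1)w)$ becomes $\exp\{(m+1)w(2\mathcal{E}+1)\}$. Multiplying both sides by $e^{(2x-m-1)w}$ turns the left-hand side into $\exp\{(2(m+1)\mathcal{E}+2x)w\}$, whose coefficient of $w^{n}/n!$ is $2^{n}(m+1)^{n}E_{n}(x/(m+1))$. Extracting the matching coefficient on the right and dividing through by $2^{n}(m+1)^{n}$ yields the stated identity, with the prefactors $(m+1)^{k}/2^{2k+m}$ and $4^{n_{1}+\cdots+n_{M}}/(m+1)^{n_{1}+\cdots+n_{M}}$ emerging from the repackaging of $(m+1)^{k-N}/2^{m+2(k-N)}$ with $N:=n_{1}+\cdots+n_{M}$.

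The main obstacle will be the bookkeeping inside the multinomial expansion: one has to track simultaneously the hidden slot $n_{0}=k-N$ of the multinomial coefficient, the propagation of the doubled loop $L_{1}=2L$ through the combinatorial counts $N(\ell,m)$ and $n(1,\ell,m)$, and the Pascal identity that recombines the two binomials into the bracket form. Once that algebra is organized, the umbral conversion via \eqref{eq:Ep} and the final coefficient comparison are routine.
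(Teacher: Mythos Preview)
Your proposal is correct and follows essentially the same route as the paper: specialize the $m$-loop decomposition \eqref{eq:Main} to the reflected Brownian motion via \eqref{eq:1dim}, count the nonadjacent loop products using $N(\ell,m)$ and $n(1,\ell,m)$, expand the geometric series by the multinomial theorem, and then convert via \eqref{eq:Ep} and coefficient comparison. The only cosmetic difference is that the paper keeps the two contributions $n(1,\ell,m)\sech^{2\ell}w/2^{2\ell-1}$ and $N(\ell,m-1)\sech^{2\ell}w/2^{2\ell}$ separate from the outset (so the bracket form appears directly), whereas you first merge them into $S_\ell=L^\ell\bigl[\binom{m-\ell+1}{\ell}+\binom{m-\ell}{\ell-1}\bigr]$ and then recover the bracket by Pascal's rule; and the paper multiplies by $e^{wx}$ with a final substitution $x\mapsto 2x$ rather than by $e^{(2x-m-1)w}$ as you do---both are equivalent bookkeeping.
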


\begin{proof}
Let $n=m$ in (\ref{eq:Main}) and apply (\ref{eq:1dim}) to have
\begin{align*}
\sech((m+1)w) & =\sech w\frac{\sinh^{m}w}{\sinh^{m}(2w)}\sum_{k=0}^{\infty}\left(n(1,1,m)\frac{\sech^{2}w}{2}+\right.\\
& \quad N(1,m-1)\frac{\sech^{2}w}{2^{2}}-n(1,2,m)\frac{\sech^{4}w}{2^{3}}-\\
& \quad N(2,m-1)\frac{\sech^{4}w}{2^{4}}+\cdots+(-1)^{M}n(1,M+1,m)\times\\
& \quad\left.\frac{\sech^{(2M+2)}w}{2^{2M+1}}+(-1)^{M}N(M+1,m-1)\frac{\sech^{(2M+2)}w}{2^{2M+2}}\right)^{k}\\
& =\frac{\sech^{(m+1)}w}{2^{m}}\sum_{k=0}^{\infty}\left(\frac{\sech^{2}w}{2}+\frac{\binom{m-1}{1}\sech^{2}w}{2^{2}}-\right.\\
& \quad\frac{\binom{m-2}{1}\sech^{4}w}{2^{3}}-\frac{\binom{m-2}{2}\sech^{4}w}{2^{4}}+\cdots+(-1)^{M}\times\\
& \quad\left.\frac{\binom{m-M-1}{M}\sech^{(2M+2)}w}{2^{2M+1}}+(-1)^{M}\frac{\binom{m-M-1}{M+1}\sech^{(2M+2)}w}{2^{2M+2}}\right)^{k}.
\end{align*}
Applying (\ref{eq:Ep}) to get 
\begin{align*}
e^{(m+1)w(2\mathcal{E}+1)} & =\sum_{k=0}^{\infty}\frac{(m+1)^{k}}{2^{2k+m}}\sum_{n_{1},\dots,n_{M}=0}^{k}\binom{k}{n_{1},\dots,n_{M}}\frac{(-1)^{n_{1}+n_{3}+\cdots+n_{M'}}4^{n_{1}+\cdots+n_{M}}}{(m+1)^{n_{1}+\cdots+n_{M}}}\\
& \quad\times\left(\frac{\binom{m-2}{1}}{2^{3}}+\frac{\binom{m-2}{2}}{2^{4}}\right)^{n_{1}}\cdots\left(\frac{\binom{m-M-1}{M}}{2^{2M+1}}+\frac{\binom{m-M-1}{M+1}}{2^{2M+2}}\right)^{n_{M}}\\
& \quad\times e^{w(2\mathcal{E}^{(2k+2n_{1}+4n_{2}+\cdots+2Mn_{M}+m+1)}+2k+2n_{1}+4n_{2}+\cdots+2Mn_{M}+m+1)}.
\end{align*}
Multiplying by $e^{wx}$ produces 
\begin{align*}
e^{w(2m\mathcal{E}+2\mathcal{E}+x)} & =\sum_{k=0}^{\infty}\frac{(m+1)^{k}}{2^{2k+m}}\sum_{n_{1},\dots,n_{M}=0}^{k}\binom{k}{n_{1},\dots,n_{M}}\frac{(-1)^{n_{1}+n_{3}+\cdots+n_{M'}}4^{n_{1}+\cdots+n_{M}}}{(m+1)^{n_{1}+\cdots+n_{M}}}\\
& \quad\times\left(\frac{\binom{m-2}{1}}{2^{3}}+\frac{\binom{m-2}{2}}{2^{4}}\right)^{n_{1}}\cdots\left(\frac{\binom{m-M-1}{M}}{2^{2M+1}}+\frac{\binom{m-M-1}{M+1}}{2^{2M+2}}\right)^{n_{M}}\\
& \quad\times e^{w(2\mathcal{E}^{(2k+2n_{1}+4n_{2}+\cdots+2Mn_{M}+m+1)}+2k+2n_{1}+4n_{2}+\cdots+2Mn_{M}+x)}.
\end{align*}
Now, we identify the coefficients of $w^{n}$ on both sides: 
\begin{itemize}
\item The left-hand side is simply
\[
(2m\mathcal{E}+2\mathcal{E}+x)^{n}=(2m+2)^{n}\left(\mathcal{E}+\frac{x}{2m+2}\right)^{n}=(2m+2)^{n}E_{n}\left(\frac{x}{2m+2}\right);
\]
\item while the right-hand side, we only need to focus on the term 
\begin{align*}
& \quad(2\mathcal{E}^{(2k+2n_{1}+4n_{2}+\cdots+2Mn_{M}+m)}+2k+2n_{1}+4n_{2}+\cdots+2Mn_{M}+x)^{n}\\
& =2^{n}E_{n}^{(2k+2n_{1}+4n_{2}+\cdots+2Mn_{M}+m)}\left(k+n_{1}+2n_{2}+\cdots+Mn_{M}+\frac{x}{2}\right).
\end{align*}
\end{itemize}
Therefore, simplification, with $x\mapsto2x$, gives the desired identity. 
\end{proof}
\begin{example}
The formulas derived from $4$- and $5$-loop cases are as follows.
\begin{align*}
E_{n}\left(\frac{x}{5}\right) & =\frac{1}{5^{n}}\sum_{k=0}^{\infty}\sum_{\ell=0}^{k}\frac{5^{k}(-1)^{\ell}}{2^{2k+2\ell+4}}\binom{k}{\ell}E_{n}^{(2\ell+2k+5)}\left(x+\ell+k\right),\\
E_{n}\left(\frac{x}{6}\right) & =\frac{1}{6^{n}}\sum_{k=0}^{\infty}\sum_{n_{1},n_{2}=0}^{k}\frac{(-1)^{n_{1}}3^{k-n_{1}-n_{2}}}{2^{k+3n_{1}+4n_{2}+5}}\binom{k}{n_{1},n_{2}}\\
 & \quad\times E_{n}^{(2k+2n_{1}+4n_{2}+6)}\left(x+k+n_{1}+2n_{2}\right).
\end{align*}
\end{example}

\subsection{$3$-dim Bessel process on $\mathbb{R}^{3}$. }

\begin{figure}
\includegraphics[scale=0.3]{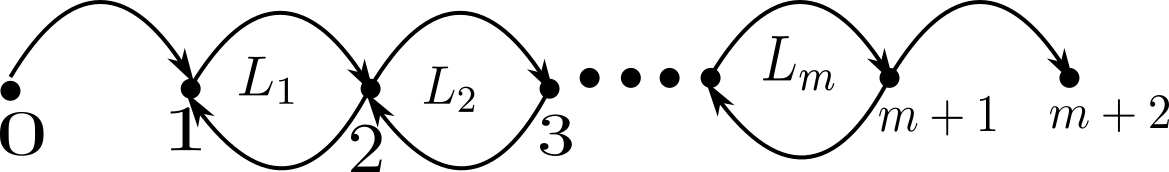}

\caption{\label{fig:3dim}$3$-dim Bessel Process}
\end{figure}

We can also find the generating functions for three consecutive sites
$a<b<c$, \cite[pp.~463--464]{Formulas} with variable $w$: 
\begin{align*}
\phi_{a\rightarrow b} & =\frac{b\sinh(aw)}{a\sinh(bw)},\\
\phi_{b\rightarrow a|\cancel{c}} & =\frac{a\sinh\left((c-b)w\right)}{b\sinh\left((c-a)w\right)},\\
\phi_{b\rightarrow c|\cancel{a}} & =\frac{c\sinh\left((b-a)w\right)}{b\sinh\left((c-a)w\right)}.
\end{align*}
As stated in \cite[Rem.~4.1]{JiuVignat}, $\phi_{a\rightarrow0|\cancel{b}}=0$
for $0<a<b$. In this case, the first loop occurs between site $1$
and $2$ (instead of $0$ and $1$). We can still have (\ref{eq:Main})
by shifting all the indices as $a_{m}\mapsto m+1$, ONLY for the loops: 
\begin{thm}
For the $3$-dimensional Bessel process on sites $0,1,\ldots,m+2$,
we have 
\begin{equation}
\phi_{0\rightarrow(m+2)}=\phi_{0\rightarrow1}\prod_{j=1}^{m+1}\phi_{j\rightarrow(j+1)|\cancel{j-1}}\frac{1}{1-\sum\limits _{k=1}^{m}\sum\limits _{(k,l,n)}}.\label{eq:3dim}
\end{equation}
Let $M=\lceil m\rceil-1$, and $M'$ be the largest odd number less
or equal to $M$, then 
\begin{align*}
& \quad B_{n+1}\left(\frac{2+x}{m+2}\right)-B_{n+1}\left(\frac{x}{m+2}\right)\\
& =\frac{n+1}{(m+2)^{n}}\sum_{k=0}^{\infty}\frac{m^{k}}{2^{2k+m}}\sum_{n_{1},\dots,n_{M}=0}^{k}\binom{k}{n_{1},\dots,n_{M}}(-1)^{n_{1}+n_{3}+\cdots+n_{M'}}\\
& \quad\times\frac{\binom{m-1}{2}^{n_{1}}\binom{m-2}{3}^{n_{2}}\cdots\binom{m-M}{M}^{n_{M}}}{4^{n_{1}+2n_{2}+\cdots+Mn_{M}}m^{n_{1}+\cdots+n_{M}}}\\
& \quad\times E_{n}^{(2k+2n_{1}+4n_{2}+\cdots+2Mn_{M}+m)}(k+n_{1}+2n_{2}+\cdots+Mn_{M}+x).
\end{align*}
\end{thm}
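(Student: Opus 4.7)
The plan has two parts: first establish~\eqref{eq:3dim}, then derive the Bernoulli identity from it. For~\eqref{eq:3dim}: as cited in \cite[Rem.~4.1]{JiuVignat}, $\phi_{a\rightarrow0|\cancel{b}}=0$ for $0<a<b$ in the Bessel process, so the putative first loop $L_{1}=\phi_{0\rightarrow1}\phi_{1\rightarrow0|\cancel{2}}$ vanishes. Applying Thm.~\ref{thm:LOOP} to the sites $0,1,\ldots,m+2$ as an $(m+1)$-loop formula and feeding in $L_{1}=0$ annihilates every term in $\sum_{k=1}^{m+1}\sum_{(k,l,m+1)}$ whose starting index is $j_{1}=1$; reindexing the surviving loops $L_{2},\ldots,L_{m+1}\mapsto L_{1},\ldots,L_{m}$ then produces~\eqref{eq:3dim}.

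For the identity, I substitute the Bessel MGFs. A telescoping gives $\phi_{0\rightarrow1}\prod_{j=1}^{m+1}\phi_{j\rightarrow(j+1)|\cancel{j-1}}=(m+2)w/(2^{m+1}\sinh w\cosh^{m+1}w)$, and every loop collapses to $L_{k}=\sech^{2}w/4$ (the $(j+1)/j$ and $j/(j+1)$ factors cancel across the two directions). The pivotal recognition is that the theorem's Bernoulli difference equals the coefficient of $w^{n}/n!$ in $(e^{(x+2)w}-e^{xw})/(e^{(m+2)w}-1)$, specifically $\frac{(m+2)^{n}}{n+1}[B_{n+1}((x+2)/(m+2))-B_{n+1}(x/(m+2))]$, by a standard Bernoulli generating-function computation. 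Factoring $e^{(x+2)w}-e^{xw}=2e^{(x+1)w}\sinh w$ and $e^{(m+2)w}-1=2e^{(m+2)w/2}\sinh((m+2)w/2)$, and then substituting $u=w/2$ so that $\sinh((m+2)w/2)=\sinh((m+2)u)$ aligns with the Bessel formula, transforms this quotient into $2e^{(2x-m)u}\sinh u\cosh u/\sinh((m+2)u)$. Using~\eqref{eq:3dim} in the $u$ variable to rewrite $1/\sinh((m+2)u)$ and cancelling the $\sinh u$ yields
\[
\frac{e^{(x+2)w}-e^{xw}}{e^{(m+2)w}-1}=\frac{e^{(2x-m)u}}{2^{m}\cosh^{m}u}\cdot\frac{1}{1-\sum_{k=1}^{m}\sum_{(k,l,m)}|_{u}}.
\]

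The remainder is a mechanical umbral expansion. Using $1/\cosh^{m}u=e^{u(2\mathcal{E}^{(m)}+m)}$, the prefactor becomes $e^{u(2x+2\mathcal{E}^{(m)})}/2^{m}$. Since all $m$ Bessel loops equal $L=\sech^{2}u/4$ and $N(l,m)=\binom{m-l+1}{l}$ by the combinatorial lemmas, the denominator sum reads $\sum_{k=1}^{m}\sum_{(k,l,m)}|_{u}=mL+\sum_{i=1}^{M}(-1)^{i}\binom{m-i}{i+1}L^{i+1}$. Expanding geometrically and then multinomially, indexed by $(n_{0},n_{1},\ldots,n_{M})$ with $n_{0}=k-n_{1}-\cdots-n_{M}$, and collapsing independent copies via $\mathcal{E}^{(p)}+\mathcal{E}^{(q)}=\mathcal{E}^{(p+q)}$, yields per term an exponential $e^{u(2x+P+2\mathcal{E}^{(m+P)})}$ with $P=2k+2n_{1}+4n_{2}+\cdots+2Mn_{M}$. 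Extracting the coefficient of $u^{n}/n!$ gives $2^{n}E_{n}^{(m+P)}(x+k+n_{1}+2n_{2}+\cdots+Mn_{M})$; the translation to $w^{n}/n!$ via $u=w/2$ contributes a compensating $2^{-n}$, and equating with the Bernoulli coefficient---then dividing through by $(m+2)^{n}/(n+1)$---yields the claimed identity.

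The main obstacle is the combinatorial/algebraic bookkeeping in the multinomial step: verifying that $\prod_{i}(-1)^{in_{i}}=(-1)^{\sum_{i}in_{i}}$ reduces modulo $2$ to the theorem's signature $(-1)^{n_{1}+n_{3}+\cdots+n_{M'}}$, and that the combined factors $m^{n_{0}}$, $4^{-(k+\sum_{i}in_{i})}$, $\prod_{i}\binom{m-i}{i+1}^{n_{i}}$, and the overall $1/2^{m}$ really do rearrange to the compact $\frac{m^{k}}{2^{2k+m}}\cdot\frac{\prod\binom{m-i}{i+1}^{n_{i}}}{4^{n_{1}+2n_{2}+\cdots+Mn_{M}}m^{n_{1}+\cdots+n_{M}}}$ of the theorem. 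A subtler but essential point is the rescaling $u=w/2$: without it the Bessel $\sinh((m+2)w)$ does not align with the $\sinh((m+2)w/2)$ natural to the Bernoulli GF for the shift $2/(m+2)$, and the arguments of $B_{n+1}$ cannot be matched directly.
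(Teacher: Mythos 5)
Your proposal is correct, and its skeleton coincides with the paper's: derive \eqref{eq:3dim} from Theorem \ref{thm:LOOP} by observing $\phi_{1\rightarrow0|\cancel{2}}=0$ kills the first loop and shifting indices; note that every surviving Bessel loop collapses to $\sech^{2}w/4$ so that the denominator becomes $1-\sum_{l}(-1)^{l+1}\binom{m-l+1}{l}L^{l}$ via $N(l,m)=\binom{m-l+1}{l}$; expand geometrically and multinomially; and read off higher-order Euler polynomials from the $\sech^{p}$ factors. The one genuine divergence is how the Bernoulli side is produced. The paper stays inside the umbral calculus: it writes $(m+2)w/\sinh((m+2)w)=e^{(m+2)w(2\mathcal{B}+1)}$, cancels the leftover $e^{2w(2\mathcal{B}'+1)}$ by multiplying through by $e^{4w\mathcal{U}}$, and obtains the telescoping difference $B_{n+1}(\cdot)-B_{n+1}(\cdot)$ from the evaluation $\mathcal{U}^{n}=\int_{0}^{1}u^{n}\,du$. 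You instead recognize that difference directly as the coefficient of $w^{n}/n!$ in $(e^{(x+2)w}-e^{xw})/(e^{(m+2)w}-1)$, and align it with the Bessel formula by elementary hyperbolic factorizations and the rescaling $u=w/2$ (which plays the role of the paper's final substitution $x\mapsto2x$). Your route is slightly more elementary in that it dispenses with the $\mathcal{B}$ and $\mathcal{U}$ symbols entirely for this half of the argument; the paper's route is the one that generalizes uniformly across both models in Section \ref{sec:1loop}. Both bookkeeping computations check out, and in fact your pattern $\binom{m-i}{i+1}^{n_{i}}$ agrees with the expansion appearing inside the paper's own proof, which indicates that the $\binom{m-M}{M}^{n_{M}}$ in the theorem's statement is a typo for $\binom{m-M}{M+1}^{n_{M}}$.
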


\begin{proof}
The steps will be similar to that of (\ref{eq:1dimIdentity}), so
we shall skip some direct but tedious calculation steps. By (\ref{eq:3dim})
and the well-known formula $\sinh(2w)=2\sinh w\cosh w$, we have 
\begin{align*}
\frac{(m+2)w}{\sinh((m+2)w)} & =\frac{(m+2)w}{\sinh(2w)}\sum_{k=0}^{\infty}\left(\frac{\binom{m}{1}\sech^{2}w}{4}-\frac{\binom{m-1}{2}\sech^{4}w}{4^{2}}\right.\\
& \quad\left.+\cdots+(-1)^{M}\frac{\binom{m-M}{M+1}\sech^{2M+2}w}{4^{M+1}}\right)^{k}.
\end{align*}
By (\ref{eq:Bp}) and (\ref{eq:Ep}), we deduce that 
\begin{align*}
e^{(m+2)w(2\mathcal{B}+1)} & =e^{2w(2\mathcal{B}'+1)}\sum_{k=0}^{\infty}\frac{(m+2)m^{k}}{2^{2k+m+1}}\sum_{n_{1},\dots,n_{M}=0}^{k}\binom{k}{n_{1},\dots,n_{M}}\\
& \quad\times(-1)^{n_{1}+n_{3}+\cdots+n_{M'}}\frac{\binom{m-1}{2}^{n_{1}}\binom{m-2}{3}^{n_{2}}\cdots\binom{m-M}{M}^{n_{M}}}{4^{n_{1}+2n_{2}+\cdots+Mn_{M}}m^{n_{1}+\cdots+n_{M}}}\\
& \quad\times e^{w\left(2\mathcal{E}^{(2k+2n_{1}+4n_{2}+\cdots+2Mn_{M}+m)}+2k+2n_{1}+4n_{2}+\cdots+2Mn_{M}+m\right)}.
\end{align*}
In order to cancel the $e^{4w\mathcal{B}'}$ on the right-hand side,
we multiplying by $e^{4w\mathcal{U}+wx}$ to have 
\begin{align*}
e^{w((2m+4)\mathcal{B}+4\mathcal{U}+x)} & =\sum_{k=0}^{\infty}\frac{(m+2)m^{k}}{2^{2k+m+1}}\sum_{n_{1},\dots,n_{M}=0}^{k}\binom{k}{n_{1},\dots,n_{M}}\times\\
& \quad(-1)^{n_{1}+n_{3}+\cdots+n_{M'}}\frac{\binom{m-1}{2}^{n_{1}}\binom{m-2}{3}^{n_{2}}\cdots\binom{m-M}{M}^{n_{M}}}{4^{n_{1}+2n_{2}+\cdots+Mn_{M}}m^{n_{1}+\cdots+n_{M}}}\times\\
& \quad e^{w(2\mathcal{E}^{(2k+2n_{1}+4n_{2}+\cdots+2Mn_{M}+m)}+2k+2n_{1}+4n_{2}+\cdots+2Mn_{M}+x)}.
\end{align*}
When identifying the coefficient of $w^{n}$, the right-hand side
directly gives the Euler polynomial of higher-orders; while the left-hand
side is, by (\ref{eq:RVU}), 
\begin{align*}
((2m+4)\mathcal{B}+4\mathcal{U}+x)^{n} & =\int_{0}^{1}((2m+4)\mathcal{B}+4u+x)^{n}du\\
& =\frac{((2m+4)\mathcal{B}+4u+x)^{n+1}}{4(n+1)}\bigg|_{u=0}^{u=1}\\
& =\frac{(2m+4)^{n+1}}{4(n+1)}\left(B_{n+1}\left(\frac{x+4}{2m+4}\right)-B_{n+1}\left(\frac{x}{2m+4}\right)\right).
\end{align*}
Simplification and substitution $x\mapsto2x$ complete the proof. 
\end{proof}
\begin{example}
The identities from \emph{three} and \emph{four} loops are given by
\begin{align*}
& \quad B_{n+1}\left(\frac{x+2}{5}\right)-B_{n+1}\left(\frac{x}{5}\right)\\
& =\frac{n+1}{5^{n}}\sum_{k=0}^{\infty}\frac{3^{k}}{2^{2k+3}}\sum_{\ell=0}^{k}\binom{k}{\ell}(-1)^{\ell}\frac{1}{12^{\ell}}E_{n}^{(2k+2\ell+3)}(k+\ell+x),
\end{align*}
and 
\begin{align*}
& \quad B_{n+1}\left(\frac{x+2}{6}\right)-B_{n+1}\left(\frac{x}{6}\right)\\
& =\frac{n+1}{6^{n}}\sum_{k=0}^{\infty}\frac{4^{k}}{2^{2k+4}}\sum_{\ell=0}^{k}\binom{k}{\ell}(-1)^{\ell}\frac{3^{\ell}}{4^{2\ell}}E_{n}^{(2k+2\ell+4)}(k+\ell+x).
\end{align*}
\end{example}
We shall briefly verify the identities in the example for the readers by direct calculation of the generating functions of Bernoulli and Euler polynomials. 
\begin{proof}
It suffices to show
\begin{align*}
& \quad \sum_{n=0}^{\infty}\left(B_{n+1}\left(\frac{x+2}{5}\right)-B_{n+1}\left(\frac{x}{5}\right)\right)\frac{t^{n+1}}{(n+1)!}\\
& =t\sum_{n=0}^{\infty}\sum_{k=0}^{\infty}\frac{3^{k}}{2^{2k+3}}\sum_{\ell=0}^{k}\binom{k}{\ell}(-1)^{\ell}\frac{1}{12^{\ell}}E_{n}^{(2k+2\ell+3)}(k+\ell+x)\frac{\left(\frac{t}{5}\right)^n}{n!},
\end{align*}
where the left-hand side is
\begin{align*}
& \quad \sum_{n=0}^{\infty}\left(B_{n+1}\left(\frac{x+2}{5}\right)-B_{n+1}\left(\frac{x}{5}\right)\right)\frac{t^{n+1}}{(n+1)!}\\
& =\sum_{n+1=0}^{\infty}\left(B_{n+1}\left(\frac{x+2}{5}\right)-B_{n+1}\left(\frac{x}{5}\right)\right)\frac{t^{n+1}}{(n+1)!}-B_0\left(\frac{x+2}{5}\right)+B_0\left(\frac{x}{5}\right)\\
& =\left(\frac{t}{e^t-1}\right)e^{\frac{x+2}{5}\cdot t}-\left(\frac{t}{e^t-1}\right)e^{\frac{x}{5}\cdot t}\\
& =\frac{t(e^{\frac{2t}{5}}-1)}{e^t-1}e^{\frac{tx}{5}},
\end{align*}
while the right-hand side is
\begin{align*}
& \quad t\sum_{n=0}^{\infty}\sum_{k=0}^{\infty}\frac{3^{k}}{2^{2k+3}}\sum_{\ell=0}^{k}\binom{k}{\ell}(-1)^{\ell}\frac{1}{12^{\ell}}E_{n}^{(2k+2\ell+3)}(k+\ell+x)\frac{\left(\frac{t}{5}\right)^n}{n!}\\
& =t\sum_{k=0}^{\infty}\frac{3^{k}}{2^{2k+3}}\sum_{\ell=0}^{k}\binom{k}{\ell}(-1)^{\ell}\frac{1}{12^{\ell}}\sum_{n=0}^{\infty}E_{n}^{(2k+2\ell+3)}(k+\ell+x)\frac{\left(\frac{t}{5}\right)^n}{n!}\\
& =t\sum_{k=0}^{\infty}\frac{3^{k}}{2^{2k+3}}\sum_{\ell=0}^{k}\binom{k}{\ell}(-1)^{\ell}\frac{1}{12^{\ell}}\left(\frac{2}{e^{\frac{t}{5}}+1}\right)^{2k+2\ell+3}e^{(k+\ell+x)\frac{t}{5}}\\
& =te^{\frac{tx}{5}}\left(\frac{1}{e^{\frac{t}{5}}+1}\right)^3\sum_{k=0}^{\infty}3^ke^{\frac{kt}{5}}\left(\frac{1}{e^{\frac{t}{5}}+1}\right)^{2k}\sum_{\ell=0}^{k}\binom{k}{\ell}(-1)^{\ell}\frac{1}{3^{\ell}}\left(\frac{1}{e^{\frac{t}{5}}+1}\right)^{2\ell}e^{\frac{\ell t}{5}}\\
& =te^{\frac{tx}{5}}\left(\frac{1}{e^{\frac{t}{5}}+1}\right)^3\sum_{k=0}^{\infty}3^ke^{\frac{kt}{5}}\left(\frac{1}{e^{\frac{t}{5}}+1}\right)^{2k}\left(1-\frac{1}{3}\left(\frac{1}{e^{\frac{t}{5}}+1}\right)^2e^{\frac{t}{5}}\right)^{k}\\
& =te^{\frac{tx}{5}}\left(\frac{1}{e^{\frac{t}{5}}+1}\right)^3\frac{1}{1-\frac{3e^{\frac{t}{5}}\left(e^{\frac{t}{5}}+1\right)^2-e^{\frac{2t}{5}}}{\left(e^{\frac{t}{5}}+1\right)^4}}\\
& =te^{\frac{tx}{5}}\frac{e^{\frac{t}{5}}+1}{e^{\frac{4t}{5}}+e^{\frac{3t}{5}}+e^{\frac{2t}{5}}+e^{\frac{t}{5}}+1}\\
& =\frac{t(e^{\frac{2t}{5}}-1)}{e^t-1}e^{\frac{tx}{5}}\qedhere
\end{align*}
\end{proof}

\end{document}